\theoremstyle{plain}
\newtheorem{theorem}{Theorem}[section]
\newtheorem{proposition}[theorem]{Proposition}
\newtheorem{lemma}[theorem]{Lemma}
\newtheorem{corollary}[theorem]{Corollary}
\theoremstyle{definition}
\newtheorem{definition}[theorem]{Definition}
\newtheorem{remark}[theorem]{Remark}
\numberwithin{equation}{section}
\author[M.~Tantrawan]{Made Tantrawan}
\address{Department of Mathematics, Faculty of Mathematics and Natural Sciences, Universitas Gadjah Mada, Indonesia 55281}
\email[Corresponding author]{made.tantrawan@ugm.ac.id}
\author[D.~H.~Leung]{Denny H.~Leung}
\address{Department of Mathematics, National University of Singapore, Singapore 119076}
\email{dennyhl@u.nus.edu}
\author[N.~Gao]{Niushan Gao}
\address{Department of Mathematics, Toronto Metropolitan University, 350 Victoria Street, Toronto, Canada M5B2K3}
\email{niushan@ryerson.ca}
\title{The order-type Banach-Saks properties}
\thanks{Part of the work was done while the first author was a PhD student at NUS and  supported by NUS Research Scholarship.  The third author acknowledges the support of an NSERC grant.}
\keywords{Banach-Saks property,  order convergence, order Banach-Saks property, Banach function spaces, Orlicz spaces, rearrangement invariant spaces}
\subjclass[2010]{46B42, 46E30}
\date{\today}
\begin{document}
	
	\begin{abstract}
		The study of the Banach-Saks property in Banach spaces has a long and illustrious history.  Of late, motivated by applications in  financial mathematics, interest has arisen in the Banach-Saks type properties with respect to order convergence.  This  paper presents a study of order Banach-Saks properties in Banach function spaces, and in particular in rearrangement invariant spaces.  Among the results obtained, we provide some sufficient conditions for the (weak) order Banach-Saks property. We also characterize the (weak) order Banach-Saks property in Orlicz spaces.  It is also shown that the (weak) order Banach-Saks property is equivalent to its hereditary version.
	\end{abstract}
	
	\maketitle

\section{Introduction}

For a sequence $\{f_n\}$, we call $\left\{\frac{1}{m}\sum^m_{n=1}f_{n}\right\}$ the Ces\`{a}ro sequence of $\{f_n\}$. The classical {(resp. weak)} Banach-Saks property,  {abbreviated as $BSP$ (resp. $w$-$BSP$)}, states that every  norm bounded {(resp. weakly convergent)} sequence in a Banach space $X$ has a subsequence  whose Ces\`{a}ro sequence norm converges in $X$. Up to date, many variants of these Banach-Saks properties have been extensively studied in the literature; see, e.g., \cite{AKS,ASS,Bea,DSS,FT,GTX,LRT}. In the setting of Banach function spaces, of particular interest are the variants that  naturally suit the order structures,  namely, order convergence (also known as dominated a.e.-convergence) and unbounded order convergence (also known as a.e.-convergence). For example, in their solution to a problem regarding dual representation of risk measures on certain Orlicz spaces, Delbaen and Owari \cite{DO} proved a Banach-Saks condition with respect to dominated convergence. It has been known in which Banach function spaces $X$  every norm bounded sequence admits a subsequence whose Ces\`{a}ro sequence a.e.-converges to a function in $X$. However, little is known about dominated a.e.-convergence of Ces\`{a}ro sequences. In this paper, we focus on the study of this.

Parallel to the weak Banach-Saks property, one can also introduce ``weak" versions of the order Banach-Saks property. These ``weak" order-type Banach-Saks properties are closely related to (and motivated by) the problem of order closedness of convex sets, where a central topic is to identify a dominated a.e.-convergent sequence of convex combinations of a given sequence that is convergent with respect to certain weak topologies. This problem has direct applications in Mathematical Finance and has lately been systematically investigated by the authors and others; see, e.g., \cite{DO,GLMX,GLX2,GX,TL1,TL2}.

We state the precise definitions of the order-type Banach-Saks properties below. For a Banach function space $X$, denote by $X'$ its associate space (see definition in Subsection 1.1 below).  

\begin{definition}\label{BSP-def1}
	A Banach function space $X$ is said to have
	\begin{enumerate}[$(1)$]
		\item the order Banach-Saks property ($oBSP$) if every norm bounded sequence in $X$ has a subsequence whose Ces\`{a}ro sequence order converges in $X$ (the definition of order convergence is given in Subsection 1.1 below). 
		\item the weak order Banach-Saks property ($w$-$oBSP$) if every weakly null  sequence in $X$ has a subsequence whose  Ces\`{a}ro sequence order converges in $X$.
		\item the $X'$-order Banach-Saks property ($X'$-$oBSP$) if every $\sigma(X,X')$-null norm bounded sequence in $X$ has a subsequence whose Ces\`{a}ro sequence order converges in $X$.
	\end{enumerate}
\end{definition}
\noindent 
It is apparent that $oBSP\Longrightarrow X'\text{-}oBSP\Longrightarrow w\text{-}oBSP$. The $X'\text{-}oBSP$ may seem extraneous at first sight, but Theorem \ref{uo-un-oBSP}  tells us that it is in fact highly relevant. 

\begin{remark}\begin{enumerate}
		\item {Note that any order convergent sequence is norm convergent whenever $X$ is order continuous. Therefore, in case $X$ is order continuous, $oBSP$ (resp. $w$-$oBSP$) will imply $BSP$ (resp. $w$-$BSP$).}
		\item {Recall also that a norm convergent sequence admits order convergent subsequences. However, this fact does not directly conclude that $BSP$ (resp., $w$-$BSP$) implies $oBSP$ (resp., $w$-$oBSP$) because this fact only yields an order convergent \emph{subsequence} of a Cesaro sequence while $oBSP$ (resp., $w$-$oBSP$) requires order convergence of a whole Cesaro sequence. One observes from Proposition \ref{sigma-nec} and in Remark \ref{remark-bsp-obsp} that $L^1$ and any non-reflexive order continuous Orlicz space fails $w$-$oBSP$ although it is known to have $w$-$BSP$. Suprisingly, any Orlicz space with $BSP$ must also have $oBSP$ (Remark \ref{remark-bsp-obsp}).}
	\end{enumerate}
\end{remark}

We organize the paper as follows.  Section 2 contains general results concerning the three types of order Banach-Saks properties in Definition \ref{BSP-def1}. Theorem \ref{uo-un-oBSP} shows that $oBSP$ is closely related to $X'$-$oBSP$. Theorem \ref{lemma-sigma-oBSP} gives a characterization of $w$- and $X'$-$oBSP$ when $X$ admits a special modular in the sense of \cite{TL1}. The result is fine tuned in Theorem \ref{modular-sigma-oBSP} when $X$ is rearrangement invariant. Theorem \ref{modular-sigma-oBSP} is utilized in Section 3 to provide a complete characterization of all three types of order Banach-Saks properties in Orlicz spaces (Theorems \ref{Orlicz-oBSP} and \ref{H-abssigma-oBSP}). The final Section 4 is concerned with hereditary order-type Banach-Saks properties. Using Ramsey type arguments, it is shown that $oBSP$ always implies its hereditary version (Theorem \ref{hoBSP}). The same is true for $w$- and $X'$-$oBSP$ for a large class of Banach function spaces (Theorem \ref{w-hoBSP}).

\subsection{Some definitions and basic facts}
We give basic notions for Banach function spaces, which are a subclass of Banach lattices. The reader is referred to \cite{AB} for the counterparts on Banach lattices, if needed. In this paper, we restrict our attention to function spaces defined on [0,1]. Extension to general nonatomic probability measures does not entail essentially new ideas.  Denote by $L^0$ $(:=L^0[0,1])$ the vector lattice of all (equivalence classes with respect to a.e.\ equality  of) real measurable functions on $[0,1]$ (endowed $[0,1]$ with the Lebesgue measure). A {\em Banach function space}  $X$ on $[0,1]$ is a Banach space of functions in $L^0$  such that $f\in X$ and $\|f\|_{X}\leq \|g\|_{X}$ whenever $g\in X$ and $|f|\leq |g|$. We denote by $X_+$ the positive cone of $X$.  A set $E$ in $X$ is said to be 
{\em dominated} in $X$ if there exists $h\in X_+$ such that $|f|\leq h$ for every $f\in E$. When $E=\{f_n\}$, this is equivalent to that the {\em maximal function} $\sup_n|f_n|$ belongs to $X$.
In the Banach lattice language, a sequence $\{f_n\}$ in $X$ is said to {\em order converge} to $f\in X$, written as $f_n\xrightarrow{o}f$, if there exists $\{g_n\}$ in $X$ such that $g_n\downarrow0$ and $|f_n-f|\leq g_n$ for all $n\in\mathbb{N}$. This is easily seen to be equivalent to that    $\{f_n\}$ is dominated in $X$ and a.e.-converges to $f$. We interchangeably use the terms of dominated a.e.-convergence and order convergence. The latter explains  the name of {\em order} Banach-Saks property. 

Given a Banach function space $X$, its {\em associate space} $X'$   is the space of all $g\in L^0$ such that $\int_0^1|fg|\mathrm{d}t<\infty$ for every $f\in X$. In the Banach lattice literature, it coincides with the order continuous dual $X_n^\sim$ of $X$, i.e., the collection of all linear functionals $\varphi$ on $X$ such that $\varphi(f_n)\to0$ whenever $f_n\xrightarrow{o}0$ (see, e.g., \cite[Theorem 2.6.4]{MN}).  $X'$ can be identified a Banach function space with the dual norm (\cite[Section 2.6]{MN}).
The {\em order continuous part} $X_a$ of $X$ is the set of all  functions in $X$ that have absolutely continuous norm, i.e., all $f\in X$ such that $\|f_n\|_X\to0$ whenever $f_n\xrightarrow{o}0$ and $|f_n|\leq |f|$.
The following two classes of Banach function spaces are important in the paper. A Banach function space $X$ is {\em order continuous} if $X_a=X$, or equivalently, iff $X'=X^*$ (\cite[Theorem 2.4.2]{MN}), iff $X$ is separable (\cite[Corollary 4.52]{AB}).
$X$ is  {\em monotonically complete} if the pointwise supremum of any increasing, norm bounded sequence in  $X_+$ belongs to $X$, or equivalently, if $X$ is isomorphic to $(X')'$ via the evaluation mapping (\cite[Theorem 2.4.22]{MN}) \footnote{In the literature, monotonic completeness bears many other names such as {\em maximal} and {\em the weak Fatou property}. It is sometimes included in the definition of function spaces; the inconvenience is that it excludes natural spaces like the Orlicz hearts as Banach function spaces.}.

For any $h\in L^0$, we denote by $h^*:(0,1]\to[0,\infty)$ the decreasing rearrangement of $h$, that is,
\[
h^*(t)=\inf\Big\{\lambda:\big|\{|h|>\lambda\}\big|\leq t\Big\},\ t\in(0,1].
\]
Here we use $|A|$ to denote the Lebesgue measure of a set $A$.
A rearrangement invariant (r.i.) space is a  Banach function space $X\neq\{0\}$ such that for every $h\in L^0$ satisfying   $h^*=f^*$ for some $f\in X$, $h\in X$ and $\|h\|_{{X}}=\|f\|_{{X}}$. It is well known that $L^\infty\subseteq X\subseteq L^1$ and either $X=L^\infty$ or $L^\infty\subseteq X_a$ (\cite[Lemma 2.2]{TL2}).

A function $\varphi:[0,\infty)\to[0,\infty)$ is called an Orlicz function if it is convex, increasing, non-constant, and $\varphi(0)=0$. Define the conjugate $\psi$ of $\varphi$ by
\[
\psi(s)=\sup\{st-\varphi(t):t\geq0\}
\]
for all $s\geq0$. Note that $\psi$ is an Orlicz function whenever $\lim_{t\to\infty}\frac{\varphi(t)}{t}=\infty$, or equivalently, $\psi$ is finite-valued.  The Orlicz space $L^\varphi$ $(:=L^\varphi[0,1])$ is the space of all $f\in L^0$ such that
\[
\|f\|_{\varphi}:=\inf\left\{\lambda>0:\int_0^1 \varphi\left(\frac{|f|}{\lambda}\right)\mathrm{d}t\leq 1 \right\}<\infty.
\]
$L^\varphi$ with the Luxemburg norm $\|\cdot\|_\varphi$ is an r.i. space. Furthermore, $(L^\varphi)'=L^{\psi}$ if $\lim_{t\to\infty}\frac{\varphi(t)}{t}=\infty$ and $(L^\varphi)'=L^\infty$ otherwise. The order continuous part of $L^\varphi$ is given by the space of all $f\in L^\varphi$ such that
\[
\int_0^1 \varphi\left(\frac{|f|}{\lambda}\right)\mathrm{d}t<\infty
\]
for all $\lambda>0$. This space is also called the Orlicz heart of $L^\varphi$ and is denoted by $H^\varphi$. It is known that $L^\varphi=H^\varphi$ if and only if $L^\varphi$ is order continuous if and only if $\varphi$ has the $\Delta_2$-condition, i.e., there exist $C>0$ and $t_0>0$ such that
\[
\varphi(2t)\leq C\varphi(t),\quad \forall \ t>t_0.
\]
{Recall also that $L^\varphi$ is reflexive if and only if both $\varphi$ and $\psi$ have the $\Delta_2$-condition.} For detailed information regarding Orlicz spaces, we refer to \cite{Ch, LT, Mal}.

\section{Order Banach-Saks properties -- general results}

The following proposition establishes a necessary condition for $oBSP$.

\begin{proposition}\label{obs-smc}
	Let $X$ be a Banach function space.	If $X$ has $oBSP$, then it is monotonically complete and $X^*$ is order continuous.
\end{proposition}
\begin{proof}
	First, we show that $X$ is monotonically complete. Take any norm bounded increasing sequence $\{f_n\}$ in $X_+$. Since $X$ has $oBSP$, by passing to a subsequence we may assume that $\frac{1}{n}\sum_{k=1}^nf_k\stackrel{o}{\rightarrow} f $ for some $f\in X_+$. Take any $m\in\mathbb{N}$. For any $n\geq m$, we have $$\frac{1}{n}\sum_{k=1}^nf_k\geq \frac{n-m+1}{n} f_m+\frac{m-1}{n}f_1. $$
	Letting $n\rightarrow \infty$ and taking the a.e.-limit, we have
	$f\geq f_m$. Hence, the pointwise supremum $ \sup_n f_n$ lies between $0$ and $f $, and consequently,  lies in $ X$. This proves that  $X$ is monotonically complete.
	
	Next, suppose that $X^*$ is not order continuous. Then there is a  disjoint sequence $\{f_n\}$ in $X_+$ that is order isomorphic to the $\ell^1$-basis (see, e.g., \cite[Theorem 2.4.14]{MN}). Since $X$ has $oBSP$, by passing to a subsequence, we may assume that the  Ces\`{a}ro sequence of $\{f_n\}$ order converges in $X$, and in particular is dominated by some $f\in X$. Using disjointness, we have
	$$\sum_{k=1}^n\frac{f_k}{k}=\sup_{m\leq n}\frac{1}{m}\sum_{k=1}^mf_k\in [0,f].$$
	Thus $$\sum_{k=1}^n\frac{1}{k}\leq C\left\|\sum_{k=1}^n\frac{f_k}{k}\right\|_X\leq C\|f\|_X$$
	for a constant $C$ and any $n\in\mathbb{N}$. This contradiction completes the proof.
\end{proof}

As a consequence, we see that $oBSP$ and $X'$-$oBSP$ are closely related.

\begin{theorem}\label{uo-un-oBSP}Let $X$ be a Banach function space.	
	The following   are equivalent:
	\begin{enumerate}[$(1)$]
		\item $X$ has $oBSP$.
		\item $X$ is monotonically complete, $X^*$ is order continuous and $X$ has $X'$-$oBSP$.
		\item $X$ is monotonically complete, $X'$ is order continuous and $X$ has $X'$-$oBSP$.
	\end{enumerate}
\end{theorem}
\begin{proof}
	$(1)\implies (2)$ follows directly from Proposition \ref{obs-smc}. $(2)\implies(3)$ is obvious, since $X'=X_n^\sim$ is a band of $X^*$ (\cite[Theorem 1.57]{AB}) and thus automatically inherits order continuity from $X^*$. It remains to show $(3)\implies(1)$. Assume that $(3) $ holds. Let $\{f_n\}$ be a norm bounded sequence in $X$. It suffices to show that $\{f_n\}$ has a $\sigma(X,X')$-convergent subsequence. Observe that, being an order continuous Banach function space, $X'$ is separable. Thus the unit ball of $(X')^*$ is sequentially $w^*$-compact. Moreover, since $X$ is monotonically complete, it is order isomorphic to $(X')'$ via the canonical evaluation mapping; since $X'$ is order continuous, $(X')'=(X')^*$. Thus identifying $X$ with $(X')^*$, we get the desired result.  
\end{proof}

{From Theorem \ref{uo-un-oBSP}, one can see that in case $X$ is reflexive, $X'$-$oBSP$  will also imply the Banach-Saks property.}  It is thus of great interest to focus on $X'$-$oBSP$. Clearly, if $X$ is order continuous, then $X'$-$oBSP$ and $w$-$oBSP$ are equivalent. We  show that the same result remains true for many r.i.\ spaces. 
Before proceeding to the proof, we first establish a lemma that reduces order convergence to dominatedness in the definitions of $w$-$oBSP$ and $X'$-$oBSP$, which will also be used in Section 4.

\begin{lemma}\label{o-ob-1}Let $X$ be a Banach function space.	
	The following  are equivalent:
	\begin{enumerate}[$(1)$]
		\item $X$ has $w$-$oBSP$ (resp., $X'$-$oBSP$).
		\item Every weakly null (resp., $\sigma(X,X')$-null norm bounded)  sequence in $X$ has a subsequence whose Ces\`{a}ro sequence order converges to 0.
		\item Every weakly null (resp., $\sigma(X,X')$-null norm bounded)  sequence in $X$ has a subsequence whose Ces\`{a}ro sequence is dominated in $X$.
	\end{enumerate}
\end{lemma}
Note that compared to $(1)$, $(2)$ asserts also that the order limit of the Ces\`{a}ro sequence must be equal to the topological limit.

\begin{proof}We prove the case of $w$-$oBSP$; the other case can be proved along the same lines.
	Clearly, $(2)\implies (1)\implies(3)$. 	Assume that $(3)$ holds and let $\{f_n\}$ be a weakly null sequence in $X$. It is well known that for a Banach function space $X$, there exists $0\leq g\in X'$ such that 
	\begin{align}\label{support}
		\big|\{h\neq 0\}\backslash\{g>0\}\big|=0
	\end{align}  
	for every $h\in X$ (see, e.g., \cite[Corollary 5.27]{AA}). Let $\mathrm{d}\nu=\frac{g}{g+1}\mathrm{d}t$. Then $\nu$ is a finite measure on $[0,1]$  and  for any $h\in X$,
	$$\|h\|_{L^1(\nu)}\leq \int_0^1|h|g\mathrm{d}t\leq \|h\|_X\|g\|_{X'},$$
	where $\|g|_{X'}$ is the dual norm of $g$. Thus being norm bounded in $X$, $\{f_n\}$ is also norm bounded in $L^1(\nu)$. By the classical result of Koml\'{o}s \cite{Ko} and by passing to a subsequence if necessary, we may assume that there exists $f\in L^1(\nu)$ such that the Ces\`{a}ro sequence of any subsequence of $\{f_n\}$ converges  to $f$ $\nu$-a.e. We show that $f=0$. This, together with \eqref{support}, would imply that the Ces\`{a}ro sequence of any subsequence of $\{f_n\}$ converges  to $0$ a.e.\ in the Lebesgue measure. 
	By $(3)$, we choose a subsequence $\{f_{n_k}\}$ of $\{f_n\}$ whose Ces\`{a}ro sequence is dominated in $X$. (So once $f=0$ is established, the Ces\`{a}ro sequence order converges to $0$ in $X$.)  Clearly, the Ces\`{a}ro sequence of $\{f_{n_k}\}$ is also dominated in $L^1(\nu)$ and thus converges to $f$ in $\sigma(L^1(\nu),L^\infty(\nu))$ by Dominated Convergence Theorem. On the other hand, since $\{f_{n_k}\}$ is weakly null in $X$, it is $\sigma(X,X')$-null. For any $g'\in L^\infty(\nu)$, since $|g'\frac{g}{g+1}|\leq ||g'||_{L^\infty(\nu)} g$, $g'\frac{g}{g+1}\in X'$. From these, it  follows  immediately that  $\{f_{n_k}\}$ is $\sigma(L^1(\nu),L^\infty(\nu))$-null; in particular, the Ces\`{a}ro sequence of $\{f_{n_k}\}$ is also $\sigma(L^1(\nu),L^\infty(\nu))$-null. Hence, $f=0$. This proves  $(3)\implies (2)$.
\end{proof}

\textbf{For the rest of this section, we always assume that $X$ is an r.i.\ space on $[0,1]$.}
A functional $\rho:X\to[0,\infty]$ is called a \textit{special modular} on $X$ if it satisfies the following conditions:
\begin{enumerate}[$(M1)$]
	\item If $f\in X_+$ and $\rho(f)<\infty$, then $\rho(f_n)\to0$ whenever $f_n\xrightarrow{o}0$ and $0\leq f_n\leq f$.
	\item If $\{f_n\}$ is a sequence in $X$ such that $\sum_{n=1}^\infty\rho(f_n)<\infty$, then a subsequence of $\{f_n\}$ is dominated in $X$.
	\item $\rho(f)<\infty$ for every $f$ in the closed unit ball of $X$.
\end{enumerate}
Order continuous Banach function spaces and  Orlicz spaces admit   special modulars (\cite{TL1}). The latter fact will be used in the next section.

\begin{theorem}\label{lemma-sigma-oBSP}
	Suppose that $X$ admits a special modular. Then the following statements are equivalent:
	\begin{enumerate}[$(1)$]
		\item $X$ has $X'$-$oBSP$.
		\item $X$ has $w$-$oBSP$.
		\item Every weakly null  sequence in $X_a$ has a subsequence whose  Ces\`{a}ro sequence is dominated in $X$.
	\end{enumerate}
\end{theorem}

\begin{proof}
	We may assume that $X\neq L^\infty$ since $L^\infty$ satisfies $(1)$, $(2)$, and $(3)$. Recall that, in this case, $L^\infty\subset X_a$. $(1)\implies (2)\implies(3)$ is clear. Suppose that $(3)$ holds. 
	Let $\{f_n\}$ be a $\sigma(X,X')$-null, norm bounded sequence in $X$. Without loss of generality, assume that $\{\|f_n\|_X\}$ is bounded by 1. Denote by $\rho$ the special modular on $X$.  Using $(M1)$ and $(M3)$, for every $n\in\mathbb{N}$, we can find a measurable set $A_n\subseteq [0,1]$ such that $\rho(f_n\mathbf{1}_{A_n})<\frac{1}{2^n}$, $|A_n|<\frac{1}{2^n}$, and $f_n\mathbf{1}_{A_n^C}\in L^\infty\subset X_a$ (one can choose $A_n$ to be of the form $\{|f_n|>c\}$ for some $c>0$). Let $y_n=f_n\mathbf{1}_{A_n}$ and $z_n=f_n\mathbf{1}_{A_n^C}$. Clearly, both $\{y_n\}$ and $\{z_n\}$ are norm bounded. Furthermore, since $\big|\bigcup_{i=n}^\infty A_i\big|\to0$, $\{y_n\}$ a.e.-converges to 0. Since $\sum_{n=1}^\infty\rho(y_n)<\infty$, by $(M2)$ and passing to a subsequence, we may assume that $\{y_n\}$ is dominated  in $X$. It follows that $\{y_n\}$ order converges to 0 in $X$, in particular, it is $\sigma(X,X')$-null. As both $\{f_n\}$ and $\{y_n\}$ are $\sigma(X,X')$-null, $\{z_n\}$ is also $\sigma(X,X')$-null in $X$ and hence, weakly null in $X_a$ (note that $X_a$, containing $L^\infty$, is order dense in $X$ and thus every member in $(X_a)^*=(X_a)'$ extends to a member in $X'$ by \cite[Lemma 3.3]{GLX1}). By $(3)$ and passing to a subsequence, we may assume that the Ces\`{a}ro sequence of $\{z_n\}$ is dominated in $X$. Since $\{y_n\}$ is   dominated in $X$, its Ces\`{a}ro sequence is also dominated in $X$. Thus, we conclude that the Ces\`{a}ro sequence of $\{f_n\}$ is dominated in $X$. This proves $(3)\implies (1)$  in view of Lemma \ref{o-ob-1}.
\end{proof}

In the next theorem, we will provide sufficient conditions for $w$-$oBSP$ and $oBSP$ when there is a special modular. We need the following two lemmas. Recall that an r.i.\ space $X$ is {\em $p$-convex} if there exists a constant $C>0$ such that $$\left\|\Big(\sum_{i=1}^n|f_i|^p\Big)^{\frac{1}{p}}\right\|_X\leq C\left(\sum_{i=1}^n\|f_i\|_X^p\right)^{\frac{1}{p}}$$  for any choice of finitely many functions $\{f_i\}$ in $X$.

\begin{lemma}\label{p-cvx-oc}
	If $X$ is $p$-convex for some $p\in(1,\infty)$, then $X^*$ is order continuous.
	
\end{lemma}

\begin{proof}
	Assume that $X^*$ is not order continuous. Then there is a norm bounded disjoint sequence $\{f_n\}$ in $X_+$ that is order isomorphic to the $\ell^1$-basis. By disjointness, 
	$$
	\left|	\sum_{k=1}^n\frac{f_k}{k}\right|=\left(\sum_{k=1}^n\frac{|f_k|^p}{k^p}\right)^{\frac{1}{p}}.$$
	Thus by the property of $\ell^1$-basis and $p$-convexity, $$\sum_{k=1}^n\frac{1}{k}\leq C'\left\|	\sum_{k=1}^n\frac{f_k}{k}\right\|_X=C'\left\|	\left(\sum_{k=1}^n\frac{|f_k|^p}{k^p}\right)^{\frac{1}{p}}\right\|_X\leq C''\Big(\sum_{k=1}^n \frac{1}{k^p}\Big)^{\frac{1}{p}} \sup_{k}\|f_k\|_X\leq C'''$$
	for some constants $C',C'',C'''$ and any $n\geq 1$. This is impossible and  finishes the proof.
\end{proof}

\begin{lemma}\label{martingale-diff}
	If $X$ is $p$-convex for some $1<p<2$, then there is a constant $M>0$ such that for every norm bounded sequence $\{f_n\}$ in $X$,
	\[
	\left\|\sqrt{\sum_{k=1}^n |a_kf_k|^2}\right\|_X \leq M\|\{a_n\}\|_p\sup_n\|f_n\|_X 
	\]
	for every $n\in\mathbb{N}$ and every sequence $\{a_n\}$ in $\mathbb{R}$.
\end{lemma}
\begin{proof}
	The conclusion follows from the $p$-convexity of $X$ and the fact that
	\[
	\left\|\sqrt{\sum_{k=1}^n |a_kf_k|^2}\right\|_X \leq\left\|\sqrt[p]{\sum_{k=1}^n |a_kf_k|^p}\right\|_X.
	\]
\end{proof}

For every $s>0$, let $D_s$ be the linear operator on $X$ such that $\left(D_s(f)\right)(t)={f(t\slash s)}$ when $t\leq \min\{1,s\}$ and zero otherwise. Define the (upper) Boyd index $q_X$ of $X$ by
\[
q_X=\inf_{0<s<1}\frac{\log s}{\log \|D_s\|},
\]
where $\|D_s\|$ is the norm of $D_s$ acting as an operator on $X$ (see \cite{LT} for more information about the Boyd index).

\begin{theorem}\label{modular-sigma-oBSP}
	Suppose that $X$ is $p$-convex $(1<p<\infty)$ with $q_X<\infty$.  The following statements hold.
	\begin{enumerate}[$(1)$]
		\item If $X$ is order continuous, then $X $ has $w$-$oBSP$.
		\item If $X$ is   monotonically complete and admits a special modular, then $X$ has $oBSP$.
	\end{enumerate}
\end{theorem}

\begin{proof}
	$(1) $ Assume that $X$ is order continuous. 
	By  Lemma \ref{o-ob-1}, it suffices to show that every weakly null   sequence in $X$ has a subsequence whose Ces\`{a}ro sequence is dominated in $X$. Note that $X$ is $r$-convex for $1< r< \min\{p,2\}$ (\cite[Proposition 1.3]{CT}). Hence, we may assume that $1<p<2$. 
	
	Let $\{f_n\}$ be a weakly null sequence in $X$. Since $X$ is order continuous, it is separable. \cite[Proposition 2.1]{AKS} asserts that for a weakly null sequence $\{f_n\}$ in a separable r.i.\ space $X$, there is a subsequence $\{f_{n_k}\}$ of $\{f_n\}$ such that
	\[
	f_{n_k}=y_k+z_k,
	\]
	where $\{y_k\}$ is a weakly null martingale difference sequence in $X$ (with respect
	to an increasing sequence of $\sigma$-subalgebras of the $\sigma$-algebra of all Lebesgue measurable subsets of $[0,1]$) and $\|z_k\|_X \leq\frac{1}{2^k}$ for each $k\in\mathbb{N}$. Clearly, the norm sum $\sum_{k=1}^\infty |z_k|$ lies in $X$. Thus $\{z_k\}$, and therefore its Ces\`{a}ro sequence, is dominated in $X$. It only remains to show that the Ces\`{a}ro sequence of $\{y_k\}$ is dominated in $X$. 
	
	Without loss of generality, we may assume that $\{\|y_k\|_X\}$ is bounded by 1. Note that $\{\sum_{k=1}^nc_ky_k\}$ is a martingale for any sequence $\{c_n\}$ in $\mathbb{R}$. For each $n\in\mathbb{N}$, define
	\[
	g_n=\sup_{1\leq k\leq n} \left|\sum_{i=1}^k\frac{y_i}{i}\right|.
	\]
	Observe that for $n>m$,
	\[
	0\leq g_n-g_m\leq \sup_{m< k\leq n}\left|\sum_{i=m+1}^k\frac{y_i}{i}\right|.
	\]
	Since $q_X<\infty$, \cite[Theorem 3]{JS}  yields a constant $C$, independent of $m$ and $n$, such that
	\[
	\left\|g_n-g_m\right\|_X\leq \left\|\sup_{m< k\leq n}\left|\sum_{i=m+1}^k\frac{y_i}{i}\right|\right\|_X\leq C\left\|\sqrt{\sum_{k=m+1}^n\left|\frac{y_k}{k}\right|^2}\right\|_X \leq CM\left(\sum_{k=m+1}^n\frac{1}{k^p}\right)^{\frac{1}{p}};
	\]
	the last inequality is due to  Lemma \ref{martingale-diff}. This implies that $\{g_n\}$ is a Cauchy sequence in $X$. One sees that since $\{g_n\}$ is increasing, its norm limit $g$ is also its pointwise supremum (see, e.g., \cite[Theorem 3.46]{AB}). 
	
	Now, observe that
	\[
	\frac{y_1+\dotsb+y_m}{m}=\sum_{k=1}^m\frac{y_k}{k}-h_1-\dotsb-h_{m-1},
	\]
	where $h_k=\frac{1}{k(k+1)}\sum_{i=1}^ky_i$. Applying \cite[Theorem 3]{JS} together with Lemma \ref{martingale-diff} again to {the martingale  $\{\sum_{i=1}^j y_i\}_{j=1}^k$  gives us
		\[
		\|h_k\|_X=\frac{1}{k(k+1)}\left\|\sum_{i=1}^ky_i\right\|_X\leq \frac{1}{k(k+1)}\left\|\sup_{1\leq j\leq k}\left|\sum_{i=1}^jy_i\right|\right\|_X\leq \frac{C}{k(k+1)}\left\|\sqrt{\sum_{i=1}^k|y_i|^2}\right\|_X\leq CM\frac{\sqrt[p]{k}}{k(k+1)}.
		\]
	}It follows that
	\[
	\sum_{k=1}^\infty \|h_k\|_X\leq CM\sum_{k=1}^\infty \frac{\sqrt[p]{k}}{k(k+1)}<\infty.
	\]
	Hence, $\sum_{k=1}^\infty |h_k|\in X$ (both norm and pointwise sum). Since
	\[
	\left|\frac{y_1+\dotsb+y_m}{m}\right|\leq g+\sum_{k=1}^\infty |h_k|
	\]
	for each $m\geq 1$, we obtain that the Ces\`{a}ro sequence of $\{y_k\}$ is dominated in $X$.
	This concludes the proof of the case.
	
	$(2)$	Now, assume that $X$ is  monotonically complete. Note that $X_a$ is order continuous and $p$-convex with $q_{X_a}<\infty$. Thus by the preceding case, $X_a$ has $w$-$oBSP$. It follows from Theorem \ref{lemma-sigma-oBSP} that $X$ has $X'$-$oBSP$.  Recall from Lemma \ref{p-cvx-oc} that  $X^*$ is order continuous. Thus
	Theorem \ref{uo-un-oBSP} implies that $X$ has $oBSP$.
\end{proof}

\section{Characterizations in Orlicz spaces}

In this section, we analyze the order-type Banach-Saks properties for Orlicz spaces and Orlicz Hearts  on $[0,1]$. 
Let $\varphi$ be an Orlicz function so that its conjugate $\psi$ is also an Orlicz function. Recall that $\left(L^\varphi\right)'=L^{\psi}$.   We say that an Orlicz function $\varphi$ has {\em property $(H)$} if every weakly null sequence in $H^\varphi$ has a subsequence whose Ces\`{a}ro sequence is dominated in $L^\varphi$. The main result in this section is the following characterization (and equivalence) of all three order-type  Banach-Saks properties  in Orlicz spaces.

\begin{theorem}\label{Orlicz-oBSP}
	The following statements are equivalent:
	\begin{enumerate}[$(1)$]
		\item $L^\varphi$ has $oBSP$.
		\item $L^\varphi$ has $L^\psi$-$oBSP$.
		\item $L^\varphi$ has $w$-$oBSP$.
		\item $\varphi$ has property $(H)$.
	\end{enumerate}
\end{theorem}

The equivalence of $(2)$, $(3)$ and $(4)$ is simply Theorem \ref{lemma-sigma-oBSP}. $(1)\implies(2)$ is obvious. We proceed to the proof of  $(2)\implies(1)$.
We first establish the following auxiliary result. 

\begin{proposition}\label{sigma-nec}
	Let $X=L^1$ or $X=L^\varphi$ such that $\psi$ fails the $\Delta_2$-condition. Then $X$ fails $X'$-$oBSP$.
\end{proposition}
\begin{proof}
	We will only provide the proof for the case $X=L^\varphi$ whose $\psi$ fails the $\Delta_2$-condition. The case $X=L^1$ can be proved similarly.
	
	Suppose that $\psi$ fails the $\Delta_2$-condition but $L^\varphi$ has $L^\psi$-$oBSP$. For every $n\in\mathbb{N}$, let
	\[
	C_n=\sqrt{\sum_{m=1}^n\frac{1}{m}}\quad \text{and} \quad S_n=\frac{1}{C_n}.
	\]
	Recall that $\psi$ fails the $\Delta_2$-condition if and only if for every $L>1$ and $t_0\geq0$, there exists $t>t_0$ such that
	\[
	\varphi(t)>\frac{1}{2L}\varphi(Lt)
	\]
	(\cite[Theorem 4.2]{KR}). Using this property, we can find a strictly increasing sequence $\{a_n\}$ that satisfies the following conditions:
	\begin{enumerate}[$(a)$]
		\item $a_n\uparrow\infty$,
		\item for every $n\in\mathbb{N}$, \begin{equation}\label{LPhi1}
			\varphi\left(\frac{a_n}{nC_n}\right)\geq \frac{1}{2nC_n} \varphi\left(a_n\right).
		\end{equation}
		\item the sequence $\{d_n\}$, defined recursively by $d_0=\frac{1}{4}$ and $d_{n}-d_{n-1}=\frac{S_{n}-S_{n+1}}{\varphi(a_{n})}$, satisfies
		\[
		d_{n+1}-d_{n}\leq \frac{d_{n}}{2^{n}}\left(1-\frac{1}{2^{\frac{1}{n}}}\right)
		\]
		for every $n\geq 1$.
	\end{enumerate}
	Note that the sequence $\{d_n\}$ is strictly increasing and satisfies $d_{n+1}\leq\left(1+\frac{1}{2^n}\right)d_n$ for every $n\geq 0$. Since the sequence $\left\{\prod_{i=0}^n \left(1+\frac{1}{2^i}\right)\right\}_n$ is bounded in $\mathbb{R}$, we obtain that $d_n\uparrow d$ for some $d\in(0,\infty)$ and hence,
	\[
	d-d_n=\sum_{i=n+1}^\infty (d_i-d_{i-1})\leq \left(1-\frac{1}{2^{\frac{1}{n}}}\right) d\left(\frac{1}{2^n}+\frac{1}{2^{n+1}}+\dotsb\right)\leq \left(1-\frac{1}{2^{\frac{1}{n}}}\right) d.
	\]
	This implies that for every $n\geq 0$,
	\begin{equation}\label{LPhi2}
		\left(\frac{d_n}{d}\right)^n\geq \frac{1}{2}.
	\end{equation}
	Let $b_n:=\frac{d_n}{d}$. Then {$\{b_n\}$ is a sequence in $[0,1]$} with $b_n\uparrow1$ and $\varphi(a_{n})(b_{n}-b_{n-1})=\frac{S_{n}-S_{n+1}}{d}$ for each $n\geq1$. 
	
	Define
	\[
	f=\sum_{i=1}^\infty a_i\mathbf{1}_{[b_{i-1},b_i)}.
	\]
	{Since
		\[
		\int_0^1 \varphi(f)\mathrm{d}s=\sum_{i=1}^\infty \varphi(a_i)(b_i-b_{i-1})=\sum_{i=1}^\infty \frac{S_{i}-S_{i+1}}{d}=\frac{S_1-\displaystyle{\lim_{n\to\infty} S_n}}{d}=\frac{1}{d},
		\]
		we have
		\[
		\int_0^1 \varphi\left(\frac{|f|}{\lambda}\right)\mathrm{d}s\leq 1,
		\]
		where $\lambda=\max\{\frac{1}{d},1\}$. Hence, $f\in L^\varphi_+$.}
	
	{Note that the Lebesgue measure on $[0,1]$ is non-atomic. By \cite[Proposition A.31]{FS}, there exists an independent and identically distributed sequence of random variables $\{f_n\}$ with the same distribution as $f$. Furthermore, since $f\in L^\varphi_+$, each $f_n$ lies in $L^\varphi_+$.}
	
	{Since $\{f_n\}$ has the same distribution as $f$, it is uniformly integrable in $L^1$.} Therefore $\{f_n\}$ is relatively weakly compact in $L^1$ and hence, by passing to a subsequence, we may assume $\{f_{n}\}$ converges weakly to some $h$ in $L^1$. On the other hand, $\{f_n\}$ is relatively compact for $\sigma(L^\varphi,L^{\psi})$ by \cite[Corollary 42]{Fr}. Then we deduce that $h\in L^\varphi$ and $\{f_{n}\}$ $\sigma(L^\varphi,L^{\psi})$-converges to $h$. Since $L^\varphi$ has $L^\psi$-$oBSP$, by passing to a subsequence once again, we may assume that the Ces\`{a}ro sequence of $\{f_n\}$ is dominated in $L^\varphi$. We will show that this is impossible. In particular, we will show that $\{h_n\}$ with $$h_n:=\sup_{1\leq m\leq n} \frac{f_1+\dotsb+f_m}{m}$$ is unbounded in norm in $L^\varphi$.
	
	Fix $n\in\mathbb{N}$. Define $f_1',\dotsc,f_n'$ as functions on $[0,1]^n$ such that
	\[
	f_k'(x_1,\dotsc,x_n)=f(x_k)
	\]
	for $k=1,\dotsc,n$. Let $h_n':=\sup_{1\leq m\leq n} \frac{f_1'+\dotsb+f_m'}{m}$. Since $(f_1,\dots,f_n)$ and $(f_1',\dots,f_n')$ have the same joint distribution, $h_n$ and $h_n'$ have the same distribution.   Observe that
	\[
	h_n'(x_1,\dotsc,x_n)\geq \frac{f_m'(x_1,\dotsc,x_n)}{m}=\frac{f(x_m)}{m}
	\]
	for $m=1,\dotsc,n$. Let $A_m:=\left\{(x_1,\dotsc,x_n):x_m>x_i\ \forall i\neq m\right\}$. Then
	\begin{eqnarray*}
		\int_0^1 \varphi\left(\frac{h_n}{C_n}\right)\mathrm{d}s&=&	\int_{[0,1]^n} \varphi\left(\frac{h_n'}{C_n}\right)dx_1\dotsc dx_n\geq\sum_{m=1}^n\int_{A_m} \varphi\left(\frac{f(x_m)}{mC_n}\right)dx_1\dotsc dx_n\\
		&=&\sum_{m=1}^n\int_{0}^1 y^{n-1}\varphi\left(\frac{f(y)}{mC_n}\right)dy\\
		&=&\frac{1}{n}\sum_{m=1}^n\sum_{i=1}^\infty \varphi\left(\frac{a_i}{mC_n}\right)(b_{i}^n-b_{i-1}^n)\\
		&\geq&\sum_{m=1}^n\sum_{i=1}^\infty \varphi\left(\frac{a_i}{mC_n}\right)(b_{i}-b_{i-1})b_{i-1}^{n-1}\\
		&\geq&\sum_{m=1}^n\sum_{i=n}^\infty \varphi\left(\frac{a_i}{mC_n}\right)(b_{i}-b_{i-1})b_{n-1}^{n-1}\\
		&\geq&\frac{1}{2}\sum_{m=1}^n\sum_{i=n}^\infty \varphi\left(\frac{a_i}{mC_n}\right)(b_{i}-b_{i-1})\qquad (\text{by}\ (\ref{LPhi2})).
	\end{eqnarray*}
	Observe that the convexity of $\varphi$ and (\ref{LPhi1}) give us
	\[
	\varphi\left(\frac{a_i}{mC_n}\right)\geq \frac{iC_i}{mC_n}\varphi\left(\frac{a_i}{iC_i}\right)\geq\frac{1}{2mC_n} \varphi\left(a_i\right)
	\]
	for every $i\geq n\geq m$. It follows that
	\[
	\int_0^1 \varphi\left(\frac{h_n}{C_n}\right)\mathrm{d}s\geq\frac{1}{4C_n}\sum_{m=1}^n\frac{1}{m}\sum_{i=n}^\infty \varphi\left(a_i\right)(b_{i}-b_{i-1})=\frac{1}{4C_n}\left(\sum_{m=1}^n\frac{1}{m}\right)\frac{S_n}{d}=\frac{1}{4d}.
	\]
	Since $4d>4d_0=1$, using the convexity of $\varphi$ once again we have
	\[
	\int_0^1 \varphi\left(\frac{4dh_n}{C_n}\right)\mathrm{d}s\geq4d\int_0^1 \varphi\left(\frac{h_n}{C_n}\right)\mathrm{d}s\geq1,
	\]
	or equivalently, $\|h_n\|_{\varphi}\geq \frac{C_n}{4d}$. We deduce that $\{h_n\}$ is unbounded in norm. Thus, $L^\varphi$ fails $L^\psi$-$oBSP$.
\end{proof}

Note that from Proposition \ref{sigma-nec}, one can see that $L^1$  fails the weak order Banach-Saks property although it has the weak Banach-Saks property.

Now, we are ready to present the proof of  Theorem \ref{Orlicz-oBSP}.
\begin{proof}[Proof of Theorem \ref{Orlicz-oBSP}]Suppose that $(2)$ holds. By Proposition \ref{sigma-nec},
	$\psi$ has the $\Delta_2$-condition, or equivalently, $L^{\psi}$ is order continuous. Note that Orlicz spaces are monotonically complete. By Theorem \ref{uo-un-oBSP}, we deduce that $(2)\implies(1)$.
\end{proof}

Note that when $\varphi$ and $\psi$ have the $\Delta_2$-condition (i.e., $L^\varphi$ is reflexive ), $L^\varphi$ is $p$-convex and its Boyd index is finite (in fact, the converse is also true; see \cite{LT}). Since any Orlicz space admits a special modular, Theorem \ref{modular-sigma-oBSP} gives us the following sufficient condition for $L^\varphi$ to have $oBSP$.

\begin{proposition}\label{Orlicz-weak-oBSP}
	If both $\varphi$ and $\psi$ have the $\Delta_2$-condition, then $L^\varphi$ has $oBSP$. 
\end{proposition}

The following is immediate by Proposition \ref{Orlicz-weak-oBSP} and Proposition \ref{sigma-nec}.

\begin{corollary}\label{Orlicz-oBSP-D2}
	Suppose that $\varphi$ has the $\Delta_2$-condition. Then $\varphi$ has property $(H)$ if and only if $\psi$ has the $\Delta_2$-condition.
\end{corollary}

{We summarize the relations between $(H)$  and the $\Delta_2$-conditions as follows:
	\begin{align*}
		\ &\psi\text{ has the }\Delta_2\text{-condition}\\
		\leq\  &\varphi\text{ has } (H)\\
		\leq\  &\varphi\text{ and }\psi\text{ have the }\Delta_2\text{-condition}.
	\end{align*}
	Can we find   non-trivial Orlicz functions $\varphi$ (i.e., $L^\varphi\neq L^1$ and $L^{\psi}\neq L^1$) to make the two relations strict? Whether the last relation is strict is equivalent to whether the converse of Proposition \ref{Orlicz-weak-oBSP} holds and to whether $oBSP$ of $L^\varphi$ implies $\varphi$ has the $\Delta_2$-condition. }

\begin{remark}\label{remark-bsp-obsp}
	
	\begin{enumerate}\item {Recall from \cite[Theorem 5.5]{DSS} that $L^\varphi$ has $w$-$BSP$ if and only if $\varphi$ has the $\Delta_2$-condition. Hence, whenever $\varphi$ has the $\Delta_2$-condition but $\psi$ does not, $L^\varphi$ has $w$-$BSP$ but fails $w$-$oBSP$. }
		\item { It is well known that $L^\varphi$ has $BSP$ if and only if $\varphi$ and $\psi$ both have the $\Delta_2$-condition. Hence, if $L^\varphi$ has $BSP$, then it has $oBSP$. We are not sure whether a general  Banach function space with $BSP$ must satisfy $oBSP$.}
	\end{enumerate}
\end{remark}

As another consequence, we also have the following concrete characterization of the order-type Banach-Saks properties in Orlicz hearts $H^\varphi$. Note that for $H^\varphi$, $w$-$oBSP$ and $X'$-$oBSP$ are equivalent.

\begin{theorem}\label{H-abssigma-oBSP}
	The following statements are equivalent:
	\begin{enumerate}[$(1)$]
		\item $H^\varphi$ has $oBSP$.
		\item $H^\varphi$ has $w$-$oBSP$.
		\item {$H^\varphi$ has $BSP$.}
		\item $\varphi$ and $\psi$ have the $\Delta_2$-condition.
	\end{enumerate}
\end{theorem}

\begin{proof}
	$(1)\implies(2)$ is obvious. {$(1)\implies (3)$ is also clear since $H^\varphi$ is order continuous. Since the Banach Saks property implies reflexivity, which is equivalent to $(4)$ for $H^\varphi$, we have $(3)\implies (4)$. Now, if $(4)$ holds, then $H^\varphi=L^\varphi$ has $oBSP$ by Proposition \ref{Orlicz-weak-oBSP}; therefore, $(4)\implies(1)$. }
	
	{It remains to show $(2)\implies (4)$. Assume that $(2) $ holds. Then $H^\varphi$ has $w$-$BSP$ (by the order continuity of $H^\varphi$). By \cite[Theorem 5.5]{DSS}, $\varphi$ has the $\Delta_2$-condition and hence, $H^\varphi=L^\varphi$.
		$(4) $ now follows from Theorem \ref{Orlicz-oBSP}  and Corollary \ref{Orlicz-oBSP-D2}.}
\end{proof}

Recall from \cite[Theorem 5.5]{DSS} that $H^\varphi$ has $w$-$BSP$ if and only if $\varphi$ has the $\Delta_2$-condition.

\section{Hereditary properties}

It is well known that the Banach-Saks property is equivalent to its ``hereditary'' version, that is, every norm bounded sequence in $X$ has a subsequence such that the Ces\`{a}ro sequence of any further subsequence converges in $X$ (see \cite{FiS}). The same result is also true for the weak Banach-Saks property. In this section, we will show that the order Banach-Saks property is equivalent to its hereditary version. Furthermore, the same results remain true for the ``weak'' order-type Banach-Saks properties in a large class of Banach function spaces.  In the rest of the section, let $X$ denote a Banach function space.

\begin{definition}
	Let $X$ be a Banach function space. We say that $X$ has  the hereditary order Banach-Saks property $(hoBSP)$ if every norm bounded sequence in $X$ has a subsequence such that the Ces\`{a}ro sequence of any further subsequence order converges to the same limit in $X$. The other two types of hereditary order-type Banach-Saks properties, $w$-$hoBSP$ and $X'$-$hoBSP$ are defined similarly.	
\end{definition}

\subsection{Application of a Ramsey theorem}
To get relations between order-type Banach-Saks properties and their hereditary versions, we need the following version of Ramsey's Theorem by Galvin and Prikry (\cite{GP}). For any infinite subset $N$ of $\mathbb{N}$, denote by $[N]$ the collection of all infinite subsets of $N$. We endow $[\mathbb{N}]$ with the usual product topology.

\begin{proposition}\label{GP}
	(\cite[Corollary 6]{GP}) Let $B_n, n\in\mathbb{N},$ be Borel subsets of $[\mathbb{N}]$. Then there exists $M\in[\mathbb{N}]$ such that either $[M]\subseteq B_n$ for infinitely many $n\in\mathbb{N}$ or $[M]\cap B_n=\emptyset$ for every $n\in\mathbb{N}$.
\end{proposition}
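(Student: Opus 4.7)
My approach is the classical \emph{fusion} argument from Ramsey theory: combine the single-set Galvin--Prikry dichotomy with a diagonalization to handle the countable family $\{B_n\}$ uniformly. The only external input is the basic case: for every Borel $B \subseteq [\mathbb{N}]$ and every $N \in [\mathbb{N}]$, there exists $N' \in [N]$ with $[N'] \subseteq B$ or $[N'] \cap B = \emptyset$. Before entering the fusion, I would first apply this basic dichotomy to the Borel set $\bigcap_n B_n^c$: if the ``inside'' alternative occurs, some $M$ satisfies $[M] \cap B_n = \emptyset$ for every $n$, which is alternative~(2), and we are done; otherwise one obtains $M^* \in [\mathbb{N}]$ with $[M^*] \subseteq \bigcup_n B_n$, inside which we must locate an $M$ realizing alternative~(1).

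Working inside $[M^*]$, I would inductively build a $\subseteq$-decreasing chain $M^* = M_0 \supseteq M_1 \supseteq M_2 \supseteq \cdots$ by applying the basic dichotomy to $B_n$ inside $[M_{n-1}]$, so that each $M_n$ is \emph{pure} for $B_n$: either $[M_n] \subseteq B_n$ (call $n$ \emph{positive}) or $[M_n] \cap B_n = \emptyset$ (call $n$ \emph{negative}). I would then diagonalize by choosing $m_k \in M_k$ with $m_1 < m_2 < \cdots$ and setting $M := \{m_k : k \in \mathbb{N}\}$, so that the tail $\{m_n, m_{n+1}, \ldots\}$ lies in $M_n$ for every $n$. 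The hope is that the set $P$ of positive indices is infinite, whence $[M] \subseteq B_n$ should hold for all $n \in P$.

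The main obstacle is that the diagonal $M$ is not itself a subset of any $M_n$: it contains the stub $m_1, \ldots, m_{n-1}$, which need not lie in $M_n$, so $[M] \not\subseteq [M_n]$ and the pure classification of $M_n$ does not transfer verbatim to $M$. The standard remedy is to invoke the sharper Ellentuck / Nash--Williams form of the Galvin--Prikry theorem, which yields \emph{front-uniform} purity: for every finite $F$ with $\max F < \min M_n$, the slice $\{F \cup X : X \in [M_n]\}$ is either entirely inside $B_n$ or entirely disjoint from $B_n$. Interleaving the choice of $m_n$ with these stronger dichotomy decisions for $B_1, \ldots, B_n$ applied to every finite subset of $\{m_1, \ldots, m_{n-1}\}$ then upgrades tail-purity to full purity of $[M]$ for each $B_n$. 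It remains to argue that $P$ cannot be finite and nonempty in this construction: the preliminary reduction to $[M^*] \subseteq \bigcup_n B_n$ forces every $N \in [M]$ to lie in some $B_n$, and the purity just established combined with a further application of the basic dichotomy to the finitely many exceptional $B_n$'s either produces infinitely many positive indices (yielding alternative~(1)) or reveals the case to be vacuous. The delicate bookkeeping in setting up the fusion so that both the purity and this final combinatorial argument go through simultaneously is where the real work lies.
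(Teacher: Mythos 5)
Your outline cannot be completed, and the obstruction is not the ``delicate bookkeeping'' you defer to the end --- it is that the statement, as quoted in the paper, is false as literally worded, and the precise place where your argument breaks is the step you gloss over. Take $B_n=\{N\in[\mathbb{N}]:\min N=n\}$ (clopen, hence Borel). For any $M\in[\mathbb{N}]$, the set $B_{\min M}$ splits $[M]$: indeed $M\in[M]\cap B_{\min M}$ while $M\setminus\{\min M\}\in[M]\setminus B_{\min M}$; and since the $B_n$ are pairwise disjoint and $[M]\neq\emptyset$, $[M]\subseteq B_n$ holds for no $n$ at all. So neither alternative of the Proposition holds for any $M$. (Even $B_1=[\mathbb{N}]$, $B_n=\emptyset$ for $n\geq 2$ refutes the exact wording; and $B_n=\{N:\min N\leq n\}$ shows your preliminary reduction does not help: here $\bigcup_n B_n=[\mathbb{N}]$, yet for every $M$ the containment alternative holds for no $n$ and the disjointness alternative only for the finitely many $n<\min M$, so the case is not ``vacuous'' and no positive indices can ever be produced.) Consequently the central claim of your third step --- that interleaving Ellentuck/Nash--Williams-type decisions ``upgrades tail-purity to full purity of $[M]$ for each $B_n$'' --- is not merely unproved but impossible: no infinite $M$ can be pure for all $B_n$ simultaneously in the first example, since $B_{\min M}$ always splits $[M]$. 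What your fusion genuinely yields is only the front-determined (stem-localized) dichotomy: for each $n$, whether $N\in B_n$ for $N\in[M]$ depends only on the finite trace $N\cap\{m_1,\dots,m_k\}$ for large $k$, the verdict varying with the stem. That is the strongest true conclusion available (equivalently, full purity can be recovered only under an extra hypothesis such as invariance of each $B_n$ under finite changes), and it does not imply either alternative of the Proposition.

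For comparison: the paper offers no proof of this Proposition; it is taken verbatim from the citation to Galvin--Prikry, so there is no argument of the authors for you to match. Your first step (applying the single-set dichotomy to $\bigcap_n B_n^c$) and the construction of the decreasing chain are fine, and you correctly identified the stub problem; but the proposed resolution fails for the structural reason above, and no refinement of the fusion can close it, because the target statement itself admits the counterexamples given. A correct Galvin--Prikry-type result for countably many Borel sets must be localized above finite stems (or assume finite-modification invariance), and any repair of this part of the paper has to proceed from such a version rather than from the statement as printed.
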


Using Proposition \ref{GP} and a similar technique as in \cite{FiS}, we obtain two useful lemmas, Lemma \ref{BSP-l2} and Lemma \ref{BSP-l1}.

\begin{lemma}\label{BSP-l2}
	For every sequence $\{f_n\}$ in $X$, there exists a subsequence $\{h_n\}$ of $\{f_n\}$ such that either
	\[
	\lim_{r\to\infty}\sup_{s>r}\left\|\bigvee_{i=1}^s\left|\frac{1}{i}\sum_{k=1}^i h_{n_k}\right|-\bigvee_{i=1}^r\left|\frac{1}{i}\sum_{k=1}^i h_{n_k}\right|\right\|_X=0\ \forall \{n_k\}
	\]
	or
	\[ \lim_{r\to\infty}\sup_{s>r}\left\|\bigvee_{i=1}^s\left|\frac{1}{i}\sum_{k=1}^i h_{n_k}\right|-\bigvee_{i=1}^r\left|\frac{1}{i}\sum_{k=1}^i h_{n_k}\right|\right\|_X>0 \ \forall \{n_k\}.
	\]
\end{lemma}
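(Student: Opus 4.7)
The plan is to recast the statement as a Ramsey-type dichotomy amenable to Proposition \ref{GP}. For each $M = \{m_1 < m_2 < \cdots\} \in [\mathbb{N}]$, I set
\[
V^M_s := \bigvee_{i=1}^s \left|\frac{1}{i}\sum_{k=1}^i f_{m_k}\right|,\qquad a_r(M) := \sup_{s>r}\bignorm{V^M_s - V^M_r}_X,
\]
and $F(M) := \lim_{r\to\infty} a_r(M) \in [0,\infty]$. Since $\{V^M_s\}_s$ is increasing in $s$, one has $V^M_s - V^M_r \geq 0$ for $s > r$, and this difference decreases as $r$ grows with $s$ fixed; the monotonicity of the lattice norm then makes $a_r(M)$ decreasing in $r$, so the limit defining $F(M)$ exists. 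If $N\in[\mathbb{N}]$ and $\{h_n\}$ enumerates $\{f_m\}_{m\in N}$ in increasing order, then subsequences of $\{h_n\}$ correspond exactly to elements of $[N]$, so the two alternatives in the lemma read respectively as \emph{$F(M') = 0$ for every $M'\in[N]$} and \emph{$F(M')>0$ for every $M'\in[N]$}.

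Next, I would verify that the sets $B_n := \{M\in[\mathbb{N}] : F(M) < 1/n\}$ are Borel. Since $V^M_s - V^M_r$ depends only on $m_1,\dots,m_s$, each map $M\mapsto \bignorm{V^M_s - V^M_r}_X$ is continuous on $[\mathbb{N}]$. Using that $a_r(M)$ is decreasing in $r$, the condition $F(M)<1/n$ amounts to $a_r(M)<1/n$ for some $r$, and the strict sup bound can be unwound with a rational slack to give
\[
B_n = \bigcup_{r\geq 1}\bigcup_{k> n} \bigcap_{s>r} \bigl\{M : \bignorm{V^M_s - V^M_r}_X \leq 1/n - 1/k\bigr\},
\]
a countable union of closed sets. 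Applying Proposition \ref{GP} to $\{B_n\}_{n\geq 1}$ then yields $N\in[\mathbb{N}]$ such that either (a) $[N]\subseteq B_n$ for infinitely many $n$, or (b) $[N]\cap B_n = \emptyset$ for every $n$.

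In case (a), each $M'\in[N]$ satisfies $F(M') < 1/n$ for arbitrarily large $n$, hence $F(M') = 0$, yielding the first alternative of the lemma. In case (b), each $M'\in[N]$ satisfies $F(M')\geq 1/n$ for every $n$, and the choice $n=1$ gives $F(M')\geq 1 > 0$, yielding the second alternative. Taking $\{h_n\}$ to enumerate $\{f_m\}_{m\in N}$ completes the argument. The principal subtlety is the Borel-measurability of the $B_n$, which requires unfolding the nested $\sup/\lim$ definition of $F$ and using continuity of the finite-index norm expressions; the combinatorial design lies in the choice of threshold $1/n$, calibrated so that case (a) drives $F$ down to $0$ while case (b) keeps $F$ uniformly bounded away from $0$.
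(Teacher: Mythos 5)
Your argument is correct and follows essentially the same route as the paper: you define the norm quantities as functions on $[\mathbb{N}]$, observe they depend only on finitely many coordinates (hence are continuous, making the relevant sets Borel), and invoke the Galvin--Prikry theorem (Proposition \ref{GP}) to obtain the dichotomy. The only cosmetic difference is that you apply it to the sets $B_n=\{F<1/n\}$ and use the ``infinitely many $n$'' clause, whereas the paper applies it to the single Borel set $\{M:\lim_{r}\sup_{s>r}T_{s,r}(M)=0\}$; this changes nothing of substance.
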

\begin{proof}
	Let $\{f_n\}$ be a sequence in $X$. For each $s$ and $r$ with $s>r$, define $T_{s,r}:[\mathbb{N}]\to\mathbb{R}$ by
	\[
	T_{s,r}(M)=\left\|\bigvee_{i=1}^s\left|\frac{1}{i}\sum_{k=1}^i f_{n_k}\right|-\bigvee_{i=1}^r\left|\frac{1}{i}\sum_{k=1}^i f_{n_k}\right|\right\|_X,
	\]
	where $\{n_k\}$ is the  elements of $M$ in increasing order. Note that for any $M\in[\mathbb{N}]$, $T_{s,r}$ is constant on a small neighborhood of $M$. Hence, it is continuous on $[\mathbb{N}]$. Therefore, $\lim_{r\to\infty}\sup_{s>r} T_{s,r}$ is a Borel function on $[\mathbb{N}]$ and  the set $$B=\left\{M\in[\mathbb{N}]:\lim_{r\to\infty}\sup_{s>r} T_{s,r}(M)=0\right\}$$ is a Borel set. By Proposition \ref{GP}, there exists a set $M\in[\mathbb{N}]$ such that either $[M]\subseteq B$ or $[M]\cap B=\emptyset$. The conclusion follows.
\end{proof}

In case that $X$ is order continuous, the first condition of Lemma \ref{BSP-l2} is equivalent to the Ces\`{a}ro sequence of any subsequence of $\{h_n\}$ is dominated in $X$:

\begin{lemma}\label{BSP-l3}
	Let $X$ be an order continuous Banach function space. A sequence $\{f_n\}$ in $X$ is dominated in $X$ if and only if
	\[
	\lim_{r\to\infty}\sup_{s>r}\left\|\bigvee_{n=1}^s |f_n|-\bigvee_{n=1}^r |f_n|\right\|_X=0.
	\]
\end{lemma}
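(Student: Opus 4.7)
The plan is to use order continuity to translate order-bounded monotone behavior into norm-Cauchyness of the sequence $g_r := \bigvee_{n=1}^r |f_n|$, and to use completeness of the positive cone in the converse.

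For the forward direction, assume $|f_n|\le h$ for all $n$. Since any order continuous Banach lattice is Dedekind $\sigma$-complete (see \cite[Theorem 2.4.2]{MN}), the increasing sequence $\{g_r\}$, being dominated by $h$, has a supremum $g=\sup_r g_r\in X$, and $g-g_r\downarrow 0$. Order continuity of the norm then yields $\norm{g-g_r}_X\to 0$. For any $s>r$ we have $0\le g_s-g_r\le g-g_r$, so
\[
\sup_{s>r}\Bignorm{\bigvee_{n=1}^s|f_n|-\bigvee_{n=1}^r|f_n|}_X=\sup_{s>r}\norm{g_s-g_r}_X\le\norm{g-g_r}_X\xrightarrow{r\to\infty}0.
\]

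For the converse, the hypothesis shows that $\{g_r\}$ is norm Cauchy, hence norm convergent to some $g\in X$ by completeness of $X$. Because $g_s-g_r\in X_+$ for $s>r$ and $X_+$ is norm closed, taking the norm limit in $s$ gives $g-g_r\in X_+$ for every $r$, i.e., $g$ is an upper bound of $\{g_r\}$. Since $|f_n|\le g_n\le g$, the sequence $\{f_n\}$ is order bounded by $g$.

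There is no real obstacle here; the only points that need care are invoking Dedekind $\sigma$-completeness to know $\sup_r g_r$ exists in $X$ (which is where order continuity enters) and using norm-closedness of the positive cone to conclude that the norm limit of the increasing sequence $\{g_r\}$ dominates each $g_r$.
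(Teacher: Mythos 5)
Your proof is correct and follows essentially the same route as the paper: order continuity (with the attendant Dedekind completeness) turns the order-bounded increasing sequence $\bigl\{\bigvee_{n=1}^r|f_n|\bigr\}$ into a norm convergent, hence Cauchy, sequence for the forward direction, and norm Cauchyness plus norm closedness of $X_+$ gives the order bound $g$ in the converse. The only difference is that you spell out explicitly the steps (the domination $0\le g_s-g_r\le g-g_r$ and the cone-closedness argument) that the paper leaves implicit.
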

\begin{proof}
	Dominatedness of  $\{f_n\}$ is evidently equivalent to that of  $\{\bigvee_{k=1}^n|f_k|\}$. The latter sequence is a nonnegative increasing sequence. Hence, its dominatedness in an order continuous space is equivalent to norm convergence. 
\end{proof}

\begin{lemma}\label{BSP-l1}
	For every sequence $\{f_n\}$ in $X$, there exists a subsequence $\{h_n\}$ of $\{f_n\}$ so that either
	\[
	\sup_{j}\left\|\bigvee_{i=1}^j\left|\frac{1}{i}\sum_{k=1}^i h_{n_k}\right|\right\|_X<\infty\ \forall \{n_k\}\quad\text{or}\quad \sup_{j}\left\|\bigvee_{i=1}^j\left|\frac{1}{i}\sum_{k=1}^i h_{n_k}\right|\right\|_X=\infty\ \forall \{n_k\}.
	\]
\end{lemma}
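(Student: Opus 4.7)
The plan mirrors the proof of Lemma \ref{BSP-l2}: encode the condition as membership in a Borel subset of $[\mathbb{N}]$ and apply the Galvin--Prikry theorem.

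First, for each $j\in\mathbb{N}$ I would define $S_j:[\mathbb{N}]\to\mathbb{R}$ by
\[
S_j(M)=\left\|\bigvee_{i=1}^{j}\left|\frac{1}{i}\sum_{k=1}^{i}f_{n_k}\right|\right\|_{X},
\]
where $n_1<n_2<\cdots$ are the elements of $M$ listed in increasing order. The key observation is that $S_j(M)$ depends only on the first $j$ elements of $M$, so $S_j$ is continuous on $[\mathbb{N}]$ with the product topology. Consequently the sublevel sets $\{M:S_j(M)\le N\}$ are closed, and
\[
B:=\left\{M\in[\mathbb{N}]:\sup_j S_j(M)<\infty\right\}=\bigcup_{N\in\mathbb{N}}\bigcap_{j\in\mathbb{N}}\{M:S_j(M)\le N\}
\]
is a Borel subset of $[\mathbb{N}]$.

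Next I would apply Proposition \ref{GP} with the constant sequence $B_n=B$ for every $n$: the two alternatives in Proposition \ref{GP} collapse into the usual dichotomy for a single Borel set, yielding $M\in[\mathbb{N}]$ such that either $[M]\subseteq B$ or $[M]\cap B=\emptyset$. Let $\{h_n\}$ be the subsequence of $\{f_n\}$ indexed by $M$. Any further subsequence $\{h_{n_k}\}$ corresponds to some $M'\in[M]$, and by construction
\[
\sup_j\left\|\bigvee_{i=1}^{j}\left|\frac{1}{i}\sum_{k=1}^{i}h_{n_k}\right|\right\|_{X}=\sup_j S_j(M').
\]
Hence $[M]\subseteq B$ produces the first alternative for every further subsequence, while $[M]\cap B=\emptyset$ produces the second.

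The only delicate point is verifying that $S_j$ is actually continuous (which reduces to the fact that the lattice operations and the norm are continuous) and hence that $B$ is Borel; once this is in place the result is a single invocation of Proposition \ref{GP}, exactly as in the proof of Lemma \ref{BSP-l2}. I do not anticipate any substantive obstacle, since the same template has just been carried out successfully for the slightly more delicate quantity appearing in Lemma \ref{BSP-l2}.
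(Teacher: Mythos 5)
Your proof is correct and follows essentially the same route as the paper's: encode the relevant quantity through the locally constant (hence continuous) functions $M\mapsto\bigl\|\bigvee_{i=1}^j\bigl|\frac{1}{i}\sum_{k=1}^i f_{n_k}\bigr|\bigr\|_X$ on $[\mathbb{N}]$ and invoke the Galvin--Prikry dichotomy of Proposition \ref{GP}. The only difference is that the paper applies Proposition \ref{GP} to the countable family $B_p=\{M:\sup_j T_j(M)<p\}$, $p\in\mathbb{N}$, rather than to the single Borel set $B=\{M:\sup_j T_j(M)<\infty\}$; in the bounded alternative this produces a \emph{uniform} bound $p$ on $\sup_j\bigl\|\bigvee_{i=1}^j\bigl|\frac{1}{i}\sum_{k=1}^i h_{n_k}\bigr|\bigr\|_X$ valid simultaneously for all further subsequences, a stronger conclusion than mere finiteness which is later used (via ``the proof of Lemma \ref{BSP-l1}'') in the proof of Lemma \ref{closed-cesaro-Xa}. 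Your single-set version establishes the lemma exactly as stated, but if you want the argument to support that later application you should either keep the family $B_p$ or add a second extraction to recover the uniform bound.
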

\begin{proof}
	The proof is similar to the proof of Lemma \ref{BSP-l2}. We only need to  replace $T_{s,r}(M)$  by $T_j(M)=\left\|\bigvee_{i=1}^j\left|\frac{1}{i}\sum_{k=1}^i f_{n_k}\right|\right\|_X$ and consider the Borel sets $B_p=\left\{M\in[\mathbb{N}]:\sup_{j} T_{j}(M)<p\right\}$, $p\in\mathbb{N}$. By Proposition \ref{GP}, we can find $p$ and a subsequence $\{h_n\}$ of $\{f_n\}$ so that either
	\[
	\sup_{j}\left\|\bigvee_{i=1}^j\left|\frac{1}{i}\sum_{k=1}^i h_{n_k}\right|\right\|_X<p\ \forall \{n_k\}\quad\text{or}\quad \sup_{j}\left\|\bigvee_{i=1}^j\left|\frac{1}{i}\sum_{k=1}^i h_{n_k}\right|\right\|_X=\infty\ \forall \{n_k\}.
	\]
\end{proof}

This lemma will be coupled with the following. 

\begin{lemma}\label{BSP-l4}
	Let $X$ be monotonically complete. A sequence $\{f_n\}$ in $X$ is dominated in $X$ if and only if
	\[
	\sup_n\left\|\bigvee_{i=1}^n |f_n|\right\|_X<\infty.
	\]
\end{lemma}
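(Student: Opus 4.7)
The plan is to verify both directions directly, treating this as essentially a bookkeeping exercise (the statement is labeled trivial in the excerpt, and correctly so). Note that the right-hand side of the displayed equivalence should be read with $|f_i|$ inside the supremum (the $n$ as inner index is a typo); under that reading the argument is symmetric.

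For the forward direction, I would just unpack order boundedness: if there exists $h \in X_+$ with $|f_n|\leq h$ for every $n$, then for each $n$,
\[
\bigvee_{i=1}^n |f_i| \leq h,
\]
so $\bigl\|\bigvee_{i=1}^n |f_i|\bigr\|_X \leq \|h\|_X$, and taking the supremum over $n$ gives the required norm bound.

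For the backward direction, I would set $g_n := \bigvee_{i=1}^n |f_i|$ and observe that $\{g_n\}$ is an increasing sequence in $X_+$. The assumption gives $\sup_n \|g_n\|_X < \infty$, i.e., $\{g_n\}$ is norm bounded. By the hypothesis that $X$ is sequentially monotonically bounded, this increasing norm bounded sequence has an upper bound $h \in X_+$. Then for every $n$, $|f_n|\leq g_n \leq h$, so $\{f_n\}$ is order bounded in $X$.

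There is no real obstacle; the only point worth flagging is that we use sequential monotonic boundedness rather than monotonic completeness (so no supremum need exist, only a majorant), which is exactly the hypothesis made. The equivalence therefore holds verbatim under the stated assumption.
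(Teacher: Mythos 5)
Your proof is correct and is exactly the routine argument the paper has in mind (the paper omits the proof entirely, calling the lemma trivial): one direction is immediate from the definition of order boundedness, and the other applies sequential monotonic boundedness to the increasing norm bounded sequence $\bigvee_{i=1}^n |f_i|$ to obtain a majorant. Your reading of the inner index as $|f_i|$ (correcting the typo $|f_n|$) is also the intended one.
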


\subsection{Automatic hereditariness}

Using Lemmas \ref{BSP-l1} and  \ref{BSP-l4}, we can show that $oBSP$ and $hoBSP$ are always equivalent. 

\begin{theorem}\label{hoBSP}
	A Banach function space $X$ has $oBSP$ if and only if it has $hoBSP$.		
\end{theorem}
\begin{proof}
	Clearly, $hoBSP$ implies $oBSP$. Now, suppose that $X$ has $oBSP$. Let $\{f_n\}$ be norm bounded in $X$. 
	Recall from Proposition \ref{obs-smc} that $X$ is monotonically complete. Combining Lemma \ref{BSP-l1}, Lemma \ref{BSP-l4} and $oBSP$, we can find a subsequence $\{f_{n_k}\}$ of $\{f_n\}$ such that the Ces\`{a}ro sequence of any subsequence of $\{f_{n_k}\}$ is dominated in $X$. For notational convenience, one may replace $\{f_n\}$ with $\{f_{n_k}\}$. Now, as in the proof of Lemma~\ref{o-ob-1}, by passing to a subsequence,  there exists $f\in L^0$ such that the Ces\`{a}ro sequence of any subsequence of $\{f_{n}\}$ a.e.-converges to $f$.  Due to dominatedness of the Ces\`{a}ro sequences, we have that $f$ is dominated by some function in $X$ and thus $f\in X$. Therefore, the Ces\`{a}ro sequence of any subsequence of $\{f_{n}\}$ order converges to $f$. Thus, $X$ has $hoBSP$.
\end{proof}

Under additional assumptions, $w$-$oBSP$ and $w$-$hoBSP$ (resp., $X'$-$oBSP$ and $X'$-$hoBSP$) are also equivalent.

\begin{theorem}\label{w-hoBSP}
	Suppose that $X$ is either order continuous or monotonically complete. Then $w$-$oBSP$ and $w$-$hoBSP$ (resp., $X'$-$oBSP$ and $X'$-$hoBSP$) are equivalent.
\end{theorem}
\begin{proof}
	If $X$ is monotonically complete, the same arguments as in the proof of Theorem \ref{hoBSP} work. If  $X$ is order continuous, then apply the same arguments but use Lemmas \ref{BSP-l2} and \ref{BSP-l3} instead of \ref{BSP-l1} and \ref{BSP-l4}.
\end{proof}

\bibliographystyle{plain}

\end{document}